\documentclass{llncs}
\usepackage{amsmath}
\usepackage{amssymb}
\usepackage{mathrsfs}
\usepackage{mathabx}
\usepackage{graphicx}
\usepackage{xcolor}
\usepackage[normalsize]{subfigure}
\usepackage{hyperref}


\newcommand{\bs}[1]{\boldsymbol{#1}}

\newcommand{\FFTH}{\mathrm{FFTH}}
\newcommand{\CG}{\mathrm{CG}}

\newcommand{\scal}[2]{\bigl(#1,#2\bigr)}

\newcommand{\ZN}{\widebar{\set{Z}}^{\VN}}
\newcommand{\meas}[1]{\left\lvert#1\right\rvert}
\newcommand{\Matlab}{\textsc{Matlab}\textsuperscript{\textregistered} }

\newcommand{\set}[1]{\mathbb{#1}}


\newcommand{\Vxi}{\ensuremath{\V{\xi}}}

\newcommand{\del}{\ensuremath{\delta}}
\newcommand{\alp}{\ensuremath{\alpha}}

\newcommand{\vek}[1]{\bs{#1}} 
\newcommand{\ee}{\vek{e}}
\newcommand{\je}{\vek{j}}
\newcommand{\se}{L}
\newcommand{\tse}{\vek{\se}}

\newcommand{\x}{\vek{x}}
\newcommand{\y}{\vek{y}}

\newcommand{\LL}{L} 
\newcommand{\tL}{\vek{\LL}}

\newcommand{\FT}[1]{\widehat{#1}}

\newcommand{\DFT}{\mtrx{F}}

\newcommand{\ite}[1]{^{(#1)}}


\newcommand{\B}[1]{\boldsymbol{#1}}
\newcommand{\M}[1]{\mat{#1}}
\newcommand{\MB}[1]{\boldsymbol{\mat{#1}}}
\newcommand{\m}[1]{\bs{#1}}
\newcommand{\mat}[1]{\mathsf{#1}} 
\newcommand{\mtrx}[1]{\mathsf{#1}} 
\newcommand{\V}[1]{\bs{#1}}

\newcommand{\ML}{\ensuremath{\M{L}}}
\newcommand{\mL}{\ensuremath{\m{L}}}

\newcommand{\mI}{\ensuremath{\m{I}}}

\newcommand{\Vx}{\ensuremath{\V{x}}}

\newcommand{\Vy}{\ensuremath{\V{y}}}
\newcommand{\Vr}{\ensuremath{\V{r}}}

\newcommand{\Vk}{\ensuremath{\V{k}}}
\newcommand{\Vm}{\ensuremath{\V{m}}}

\newcommand{\MA}{\ensuremath{\M{A}}}
\newcommand{\MBA}{\ensuremath{\B{\M{A}}}}
\newcommand{\MBF}{\ensuremath{\B{\M{F}}}}
\newcommand{\MBFi}{\ensuremath{\B{\M{F}}^{-1}}}
\newcommand{\MBe}{\ensuremath{\B{\M{e}}}}
\newcommand{\Me}{\ensuremath{\M{e}}}
\newcommand{\MBPE}{\ensuremath{\B{\M{P}}_{\cE}}}
\newcommand{\MBL}{\ensuremath{\B{\M{L}}}}

\newcommand{\Vo}{\ensuremath{{\V{0}}}}

\newcommand{\MBx}{\MB{x}}

\newcommand{\mG}{\ensuremath{{\m{\Gamma}}}}

\newcommand{\MhG}{\ensuremath{{\M{\hat{\Gamma}}}}}
\newcommand{\MBhG}{\ensuremath{{\B{\M{\hat{\Gamma}}}}}}
\newcommand{\mhG}{\ensuremath{{\m{\hat{\Gamma}}}}}
\newcommand{\hG}{\ensuremath{{\hat{\Gamma}}}}
\newcommand{\kk}{\ensuremath{{\V{k}}}}

\newcommand{\MBB}{\ensuremath{{\B{\M{B}}}}}
\newcommand{\MBI}{\ensuremath{{\B{\M{I}}}}}

\newcommand{\VN}{\ensuremath{{\V{N}}}}
\newcommand{\Ve}{\ensuremath{\V{e}}}

\newcommand{\Vj}{\ensuremath{\V{j}}}


\newcommand{\Eqref}[1]{Eq.~\eqref{#1}}

\newcommand{\Fref}[1]{Fig.~\ref{#1}}
\newcommand{\JV}[1]{\textcolor{red}{#1}}
\newcommand{\PUC}{\mathcal{Y}}
\newcommand{\lPUC}{Y}

\newcommand{\beq}{\begin{equation*}}
\newcommand{\eeq}{\end{equation*}}
\newcommand{\beql}[1]{\begin{equation}\label{#1}}
\newcommand{\eeql}{\end{equation}}
\newcommand{\beqa}{\begin{align}}
\newcommand{\eeqa}{\end{align}}
\newcommand{\bml}{\begin{multline*}}
\newcommand{\eml}{\end{multline*}}
\newcommand{\bmll}[1]{\begin{multline}\label{#1}} 
\newcommand{\emll}{\end{multline}}
\newcommand{\blem}{\begin{lem}}
\newcommand{\elem}{\end{lem}}
\newcommand{\bleml}[1]{\begin{lem}\label{#1}}
\newcommand{\bpro}{\begin{proof}}
\newcommand{\epro}{\end{proof}}
\newcommand{\btheo}[1]{\begin{theorem}\label{#1}}
\newcommand{\etheo}{\end{theorem}}
\newcommand{\bcor}{\begin{corollary}}
\newcommand{\bcorl}[1]{\begin{corollary}\label{#1}}
\newcommand{\ecor}{\end{corollary}}
\newcommand{\bdf}{\begin{definition}}
\newcommand{\edf}{\end{definition}}
\newcommand{\bM}[1]{\begin{pmatrix}}
\newcommand{\eM}[1]{\end{pmatrix}}
\newcommand{\bspl}{\begin{split}}
\newcommand{\espl}{\end{split}}
\newcommand{\bal}{\begin{align}}
\newcommand{\eal}{\end{align}}




\newcommand{\cE}{\ensuremath{\mathcal{E}}}
\newcommand{\cEp}{\ensuremath{\mathcal{E^{\perp}}}}



\DeclareMathOperator{\D}{d\!}

\newcommand{\imu}{\mathrm{i}}		




\newcommand{\Kryl}[3]{\mathscr{K}_{#1}(#2,#3)}




\makeatletter
\setlength{\abovecaptionskip}{6pt}   
\setlength{\belowcaptionskip}{6pt}   

\renewcommand{\JV}[1]{#1}

\begin{document}

\mainmatter             
\title{Analysis of a Fast Fourier Transform Based Method for Modeling of
Heterogeneous Materials}
\titlerunning{Analysis of a FFT Based Method for Modeling of
Heterogeneous Materials} 

\author{Jaroslav Vond\v{r}ejc\inst{1} \and Jan Zeman\inst{1} \and
Ivo Marek\inst{2}}
\authorrunning{Jaroslav Vond\v{r}ejc et al.} 

\institute{Czech Technical University in Prague,\\ Faculty of Civil Engineering, Department of Mechanics\\
\email{vondrejc@gmail.com},
\and
Czech Technical University in Prague,\\
Faculty of Civil Engineering, Department of Mathematics}

\maketitle

\begin{abstract}
The focus of this paper is on the analysis of the Conjugate Gradient method
applied to a non-symmetric system of linear equations, arising from a Fast
Fourier Transform-based homogenization method due to Moulinec and
Suquet~\cite{Moulinec:1994:FNMC}. Convergence of the method is proven by
exploiting a certain projection operator reflecting physics of the underlying
problem. These results are supported by a numerical example, demonstrating
significant improvement of the Conjugate Gradient-based scheme over the original
Moulinec-Suquet algorithm.
\keywords{Homogenization, Fast
Fourier Transform, Conjugate Gradients}
\end{abstract}

\section{Introduction}\label{sec:introduction}

The last decade has witnessed a rapid development in advanced experimental
techniques and modeling tools for microstructural characterization,
typically provided in the form of pixel- or voxel-based geometry. Such data now
allow for the design of bottom-up predictive models of the overall behavior for
a wide range of engineering materials. Of course, such step necessitates the
development of specialized algorithms, capable of handling large-scale
voxel-based data in an efficient manner. In the engineering community, perhaps
the most successful solver meeting these criteria was proposed by Moulinec and
Suquet in~\cite{Moulinec:1994:FNMC}. The algorithm is based on the Neumann
series expansion of the \JV{inverse of} an operator arising in the associated
Lippmann-Schwinger equation and exploits the Fast Fourier Transform to evaluate
the action of the operator efficiently \JV{for voxel-based data}. In our
recent work~\cite{Zeman:2010:AFFT}, we have offered a \JV{new} approach to the
Moulinec-Suquet scheme, by exploiting the trigonometric collocation method due
to Saranen and Vainikko~\cite{Saranen:2002:PIP}. Here, the Lippman-Schwinger
equation is projected to a space of trigonometric polynomials to yield a
non-symmetric system of linear equations, see Section~\ref{sec:problem_setting}
below.  Quite surprisingly, numerical experiments revealed that the system can be
efficiently solved \JV{using the} standard Conjugate Gradient algorithm. The analysis of
this phenomenon, as presented in Section~\ref{sec:convergence}, is at the heart
of this contribution. The obtained results are further supported by a numerical
example in Section~\ref{sec:example} and summarized in
Section~\ref{sec:conclusions}.

The following notation is used throughout the paper. Symbols $a$, $\vek{a}$ and
$\vek{A}$ denote scalar, vector and second-order tensor quantities,
respectively, with Greek subscripts used when referring to the corresponding
components, e.g. $A_{\alpha\beta}$. The outer product of two vectors is denoted
as $\vek{a} \otimes \vek{a}$, whereas $\vek{a} \cdot \vek{b}$ or $\vek{A} \cdot
\vek{b}$ represents the single contraction between vectors~(or tensors). A
multi-index notation is employed, in which $\set{R}^{\VN}$ with $\VN = (N_1,
\ldots, N_d)$ represents $\set{R}^{N_1 \times \cdots \times N_d}$ and $|\VN|$
abbreviates $\prod_{\alpha=1}^d N_\alpha$. Block matrices are denoted by capital
letters typeset in a bold serif font, e.g. $\MBA\in\set{R}^{d\times d\times
\VN\times\VN}$, and the superscript and subscript indexes are used to refer to
the components, such that
$
\MB{A} 
= 
[\M{A}^{\Vk\Vm}_{\alp\beta}]_{\alp,\beta=1,\dotsc,d}^{\Vk,\Vm\in\ZN}
$
with 
$$
\ZN  
=   
\left\{ 
  \Vk \in \set{Z}^d 
  :
  -\frac{N_\alpha}{2} < 
  k_\alpha 
  \leq \frac{N_\alpha}{2}, 
  \alpha = 1, \ldots, d
\right\}.
$$
%
Sub-matrices of $\MB{A}$ are denoted as
\begin{eqnarray*}
\MBA_{\alp\beta} 
= 
\left[
  \MA_{\alp\beta}^{\Vk\Vm}
\right]^{\Vk,\Vm\in\ZN}
\in
\set{R}^{\VN\times\VN},
&&
\MBA^{\Vk\Vm} 
= 
\left[
  \MA_{\alp\beta}^{\Vk\Vm}
\right]_{\alp,\beta=1,\dotsc,d}
\in\set{R}^{d\times d}
\end{eqnarray*}
for $\alp,\beta=1,\dotsc,d$ and $\Vk,\Vm\in\ZN$.
Analogously, the block vectors are denoted by lower case letters,
e.g. $\MB{e}\in\set{R}^{d\times \VN}$ and the matrix-by-vector multiplication
is defined as
\begin{equation}\label{eq:MB_multiplication}
  \left[\MA\Me\right]_{\alp}^{\Vk}
  =
  \sum_{\beta=1}^d \sum_{\Vm\in\ZN}
  \MA_{\alp\beta}^{\Vk\Vm}
  \Me_{\beta}^{\Vm}
  \in \set{R}^{d \times \VN},
\end{equation}
with $\alp=1,\dotsc,d$ and $\Vk\in\ZN$.

\section{Problem setting}\label{sec:problem_setting}
Consider a composite material represented by a periodic unit cell
$$\PUC = \prod_{\alpha=1}^d ( -\lPUC_\alpha, \lPUC_\alpha ) \subset
\set{R}^d.$$ In the context of linear electrostatics, the associated
unit cell problem reads as
\begin{align}\label{eq:problem_def_1}
  \V{\nabla} \times \Ve( \Vx ) &= \Vo, & 
  \V{\nabla} \cdot \Vj( \Vx ) &= \Vo,  & 
  \je( \x ) &= \tse( \x ) \cdot \ee( \x ), &
  \x &\in \PUC
\end{align}
where $\ee$ is a $\PUC$-periodic vectorial electric field, $\Vj$
denotes the corresponding vector of electric current and $\mL$ is a
second-order positive-definite tensor of electric conductivity. In
addition, the field $\Ve$ is subject to a constraint
\begin{equation}\label{eq:problem_def_2}
   \langle \Ve(\Vx) \rangle 
   := 
   \frac{1}{\meas{\PUC}} 
   \int_{\PUC} \Ve( \Vx ) 
     \D{\Vx} 
   = 
   \Ve^0,
\end{equation}
where $\meas{\PUC}$ denotes the $d$-dimensional measure of $\PUC$ and $\Ve^0
\neq \vek{0}$ a prescribed macroscopic electric field.

The original problem~\eqref{eq:problem_def_1}--\eqref{eq:problem_def_2} is
then equivalent to the periodic Lippmann-Schwinger integral equation, formally
written as
\begin{equation}\label{eq:LS_real}
  \ee( \x ) 
  +
  \int_\PUC 
  \mG( \Vx - \Vy;\mL^0 ) 
  \cdot 
  \Bigl( 
    \mL( \y )-\mL^0\Bigr) \cdot \ee( \Vy )  
  \D{\Vy}
  = 
  \ee^0,\quad
  \x \in \PUC,
\end{equation}
where \JV{$\mL^0\in\set{R}^{d\times d}$} denotes a homogeneous reference medium.
The operator $\mG(\Vx,\mL^0)$ is derived from the Green's function of the
problem~\eqref{eq:problem_def_1}--\eqref{eq:problem_def_2} with $\tL( \x ) =
\tL^0$ and can be simply expressed in the Fourier space
\begin{equation}\label{eq:Gamma}
  \mhG(\Vk;\mL^0)=
\begin{cases}
  \Vo  &\Vk=\Vo
\\
\displaystyle
\frac{%
\Vxi \otimes \Vxi
}{%
\Vxi \cdot \mL^0 \cdot \Vxi}
  &
  \Vxi(\Vk)
  =
  \left(
    \frac{k_{\alp}}{Y_{\alp}}
  \right)_{\alp=1}^d; 
  \Vk\in\set{Z}^d \setminus \Vo.
\end{cases}
\end{equation}
Operator $\FT{f} = \FT{f}( \Vk )$ stands for the Fourier coefficient of $f( \x
)$ for the $\Vk$-th frequency given by
\begin{eqnarray}\label{eq:fourier_coeff_def}
  \FT{f}( \Vk ) 
  = 
  \int_\PUC f( \x ) \varphi_{-\Vk} ( \x ) 
  \D{x}, 
&&
  \varphi_{ \Vk }( \vek{x} ) 
  = 
  | \PUC |^{-\frac{1}{2}}
  \exp\left( 
    \imu \pi \sum_{\alpha=1}^d \frac{x_\alpha k_\alpha}{\lPUC_\alpha} 
  \right),
\end{eqnarray}
"$\imu$'' is the imaginary unit ($\imu^2 = -1$).
We refer to~\cite{Zeman:2010:AFFT,Milton:2002:TC} for additional details.
\JV{Note that the linear electrostatics serves here as a model problem; the
framework can be directly extended to e.g. elasticity~\cite{Smilauer:2006:MBM},
(visco-)plasticity~\cite{Prakash:2009:SMB} or to
multiferroics~\cite{Brenner:2010:RMC}.}

\subsection{Discretization via trigonometric collocation}\label{sec:discretization}

\JV{The numerical solution} of the Lippmann-Schwinger equation is based on a
discretization of a unit cell $\PUC$ into a regular periodic grid with
$N_1 \times \cdots \times N_d$ nodal points and grid spacings $\vek{h}
= ( 2\lPUC_1/N_1, \ldots, 2 \lPUC_d/N_d )$. The searched field $\ee$
in \eqref{eq:LS_real} is approximated by a trigonometric polynomial
$\Ve^\VN$ in the form~(cf.~\cite[Chapter 10]{Saranen:2002:PIP})
\begin{eqnarray}\label{eq:trig_pol_def}
  \Ve( \Vx ) 
  \approx 
  \Ve^\VN( \Vx ) 
  = 
  \sum_{\vek{\Vk} \in \ZN}
  \hat{\MBe}^{\Vk} \varphi_{\Vk}( \Vx ), 
  && 
  \Vx \in \PUC,
\end{eqnarray}
where $\hat{\MBe}^{\Vk}=(\hat{\Me}^{\Vk}_{\alp})_{\alp=1,\dotsc,d}$ designates
the Fourier coefficients defined in~\eqref{eq:fourier_coeff_def}. \JV{Notice
that the trigonometrical polynomials are uniquely determined by a regular
grid data, which makes them well-suited to problems with pixel- or voxel-based
computations.}

The trigonometric collocation method is based on the projection of
the Lippmann-Schwinger equation~\eqref{eq:LS_real} \JV{onto} the space of the
trigonometric polynomials
\begin{equation}
\mathcal{T}^\VN
=
\Bigl\{
\sum_{\kk \in \ZN} c_\kk
  \varphi_\kk, c_\kk \in \set{C} 
\Bigr\},
\end{equation}
leading to a to linear system in the form, cf.~\cite{Zeman:2010:AFFT}
\begin{eqnarray}\label{eq:linear_system}
(\MBI+\MBB) \MB{e} 
= \MB{e^0},
&&
\MBB 
=
\MBFi\MBhG\MBF(\MBL-\MB{L^0}),
\end{eqnarray}
where
$\MBe=\left(\M{e}_{\alp}^{\Vk}\right)_{\alp=1,\dotsc,d}^{\Vk\in\ZN}\in\set{R}^{d
\times \VN}$ is the \JV{unknown} vector,
$\MBI=\left[\del_{\alp\beta}\del_{\Vk\Vm}\right]_{\alp,\beta=1,\dotsc,d}^{\Vk\Vm\in\ZN}\in\set{R}^{d\times
d\times\VN\times\VN}$ is the identity matrix, expressed as the product of the
Kronecker delta functions $\del_{\alp\beta}$ and $\del_{\Vk\Vm}$, and
$\MB{e^0}=(e^0_{\alp})_{\alp=1,\dotsc,d}^{\Vk\in\ZN}\in\set{R}^{d \times \VN}$.

All the matrices in~\eqref{eq:linear_system} exhibit a block-diagonal structure.
In particular,
\begin{eqnarray*}
\MBhG
=
\left[
  \del_{\Vk\Vm}\MhG_{\alp\beta}^{\Vk\Vm}
\right]_{\alp,\beta=1,\dotsc,d}^{\Vk,\Vm\in\ZN},
& \displaystyle
\MBL 
=
\left[
  \del_{\Vk\Vm}\ML_{\alp\beta}^{\Vk\Vm}
\right]_{\alp,\beta=1,\dotsc,d}^{\Vk,\Vm\in\ZN},
&
\MB{L^0} 
=
\left[
  \del_{\Vk\Vm}\mat{L^0}_{\alp\beta}
\right]_{\alp,\beta=1,\dotsc,d}^{\Vk,\Vm\in\ZN},
\end{eqnarray*}
with $\MhG_{\alp\beta}^{\Vk\Vk} = \hG_{\alp\beta}(\Vk;\mL^0)$,
$\ML_{\alp\beta}^{\Vk\Vk} = L_{\alp\beta}(\Vk)$ and $(\mat{L^0})_{\alp\beta} =
L^0_{\alp\beta}$. The matrix $\MBF$ implements the Discrete Fourier Transform
and is defined as
\begin{align}
  \MB{F} &=
  \left[\del_{\alp\beta}\mat{F}^{\Vk\Vm}\right]_{\alp,\beta=1,\dotsc,d}^{\Vk,\Vm\in\ZN}, &
\DFT^{\Vk\V{m}} 
  & =  \frac{| \PUC |^{\frac{1}{2}}}{\prod_{\alp=1}^dN_{\alp}}  \exp   \left( - \sum_{\alpha=1}^{d} 
    2 \pi \imu \frac{k_\alpha m_\alpha}{N_\alpha}  \right),
\end{align}
with $\MBFi$ representing the inverse transform.

It follows from Eq.~\eqref{eq:MB_multiplication} that the cost of multiplication
by $\MBB$ is dominated by the action of $\MB{F}$ and $\MBFi$, which can be
performed in $O(\meas{\VN} \log \meas{\VN} )$ operations by the Fast Fourier
Transform techniques. This makes the system~\eqref{eq:linear_system} well-suited
\JV{for applying some iterative solution technique.} In particular, the
original Fast Fourier Transform-based Homogenization scheme formulated by
Moulinec and Suquet in~\cite{Moulinec:1994:FNMC} is based on the Neumann
expansion of the matrix inverse $(\MB{I} + \MB{B})^{-1}$, so as to yield the
$m$-th iterate in the form
\begin{equation}\label{eq:orig_fft}
\MB{e}\ite{m} 
=
\sum_{j=0}^{m} 
\left( -\MB{B} \right)^j
\MB{e^0}.
\end{equation}
As indicated earlier, our numerical experiments~\cite{Zeman:2010:AFFT} suggest
that the system can be efficiently solved using the Conjugate Gradient method,
despite the non-symmetry of $\MBB$ evident from~\eqref{eq:linear_system}. This
observation is studied in more detail in the next Section.

\section{Solution by the Conjugate Gradient method}\label{sec:convergence}

We start our analysis with recasting the system~\eqref{eq:linear_system} into a
more convenient form, by employing a certain operator and the associated
sub-space introduced later. Note that for simplicity, the reference conductivity
is taken as $\MB{L^0} = \lambda \MBI$.

\begin{definition}\label{def:operator} 
Given $\lambda > 0$, we define operator
$
\MB{P}_{\cE} 
=
\lambda\MB{F}^{-1}\MBhG\MB{F}
$
and associated sub-space as
$$
\cE 
= 
\left\{
\MB{P}_{\cE} 
\MBx
\mathrm{~for~}
\MBx \in\set{R}^{d\times \VN}
 \right\}
\subset
\set{R}^{d\times \VN}.
$$
\end{definition}

\begin{lemma}\label{lem:PE_projection}
The operator $\MB{P}_{\cE}$ is an orthogonal projection.
\end{lemma}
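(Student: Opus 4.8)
The plan is to verify the two defining properties of an orthogonal projection---idempotency $\MBPE^2=\MBPE$ and self-adjointness $\MBPE^*=\MBPE$---by transferring them through $\MBF$ from the block-diagonal factor $\lambda\MBhG$. The key preliminary observation is that, once $\MB{L^0}=\lambda\MBI$ is inserted so that $\Vxi\cdot\mL^0\cdot\Vxi=\lambda\,(\Vxi\cdot\Vxi)$, the nonzero diagonal blocks of $\lambda\MBhG$ collapse to
\[
\lambda\,\mhG(\Vk;\mL^0)=\frac{\Vxi\otimes\Vxi}{\Vxi\cdot\Vxi},\qquad \Vk\neq\Vo,
\]
which is exactly the rank-one orthogonal projection onto $\mathrm{span}(\Vxi)$, the $\Vk=\Vo$ block being zero. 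Each block is thus real, symmetric and idempotent, so the whole block-diagonal matrix satisfies $(\lambda\MBhG)^*=\lambda\MBhG$ and $(\lambda\MBhG)^2=\lambda\MBhG$. Idempotency of $\MBPE$ then follows immediately by inserting $\MBF\MBFi=\MBI$ between the two copies:
\[
\MBPE^2=\lambda^2\MBFi\MBhG\MBF\MBFi\MBhG\MBF=\MBFi(\lambda\MBhG)^2\MBF=\MBFi(\lambda\MBhG)\MBF=\MBPE.
\]

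The self-adjointness is where the real work lies, and the main obstacle is that $\MBF$ is \emph{not} unitary but only a positive scalar multiple of a unitary matrix. A direct character computation gives $\MBF\MBF^*=\tfrac{\meas{\PUC}}{\meas{\VN}}\MBI$, hence $\MBF^*=\tfrac{\meas{\PUC}}{\meas{\VN}}\MBFi$ and $(\MBF^*)^{-1}=\tfrac{\meas{\VN}}{\meas{\PUC}}\MBF$. Taking adjoints in the complex inner product and using $\MBhG^*=\MBhG$ from the first step then yields
\[
\MBPE^*=\lambda\,\MBF^*\MBhG\,(\MBF^*)^{-1}=\lambda\cdot\tfrac{\meas{\PUC}}{\meas{\VN}}\MBFi\,\MBhG\cdot\tfrac{\meas{\VN}}{\meas{\PUC}}\MBF=\lambda\MBFi\MBhG\MBF=\MBPE,
\]
the two conflicting scalings cancelling exactly; equivalently, $\MBPE=\MBFi(\lambda\MBhG)\MBF$ is a unitary similarity of the orthogonal projection $\lambda\MBhG$.

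Finally, since $\MBPE$ is required to act on the real space $\set{R}^{d\times\VN}$, I would check that it preserves real-valuedness, so that the Hermitian identity above coincides with symmetry in the real inner product $\scal{\cdot}{\cdot}$. This rests on the even symmetry $\mhG(-\Vk;\mL^0)=\mhG(\Vk;\mL^0)$---immediate from $\Vxi(-\Vk)=-\Vxi(\Vk)$ leaving $\Vxi\otimes\Vxi$ invariant---which preserves the conjugate symmetry of the Fourier data of any real input. Idempotency, self-adjointness and real-valuedness together establish that $\MBPE$ is an orthogonal projection.
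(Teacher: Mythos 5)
Your proof is correct and follows the same two-step skeleton as the paper's: idempotency is reduced, by cancelling $\MBF\MBFi$, to the block-wise identity $(\lambda\MBhG)^{\Vk\Vk}(\lambda\MBhG)^{\Vk\Vk}=(\lambda\MBhG)^{\Vk\Vk}$, and self-adjointness is obtained by passing the adjoint through the conjugation by $\MBF$. The genuine difference lies in how that conjugation step is justified. The paper simply asserts that $\MBF$ is unitary, so that $\MBF^{*}=\MBFi$ and $\left(\MBFi\right)^{*}=\MBF$; with the paper's normalization of $\DFT^{\Vk\Vm}$, whose prefactor is $\meas{\PUC}^{1/2}\big/\prod_{\alpha}N_{\alpha}$, this is not literally true unless $\meas{\PUC}=\meas{\VN}$, since one actually has $\MBF\MBF^{*}=\tfrac{\meas{\PUC}}{\meas{\VN}}\MBI$. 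You track this scalar explicitly and show that it cancels in $\MBPE^{*}=\lambda\MBF^{*}\MBhG^{*}\left(\MBFi\right)^{*}$, so the conclusion $\MBPE^{*}=\MBPE$ holds regardless; this makes rigorous a step the paper glosses over, at the cost of a slightly longer computation (and, as you note in passing, your idempotency argument needs only $\MBF\MBFi=\MBI$, not unitarity). Your final check that $\MBPE$ preserves real-valuedness---via the evenness $\mhG(-\Vk;\mL^0)=\mhG(\Vk;\mL^0)$, so that Hermitian self-adjointness coincides with symmetry in the real inner product on $\set{R}^{d\times\VN}$---is also absent from the paper, which works over $\set{R}^{d\times\VN}$ without comment; it is a worthwhile addition, with the minor caveat (harmless here, since the blocks are real) that for even $N_{\alpha}$ the Nyquist frequencies in $\ZN$ have $-\Vk$ aliased onto $\Vk$ itself. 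Both proofs then conclude with the standard fact that an idempotent, self-adjoint operator is an orthogonal projection.
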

\bpro 
First, we will prove that $\MB{P}_{\cE}$ is projection, i.e.
$\MB{P}_{\cE}^2=\MB{P}_{\cE}$. Since $\MB{F}$ is a unitary matrix,
it is easy to see that
\begin{equation}\label{eq:projection_PE}
\MB{P}_{\cE}^2
=
(\lambda\MB{F}^{-1}\MB{\hG}\MB{F})(\lambda\MB{F}^{-1}\MB{\hG}\MB{F})
= \MB{F}^{-1}(\lambda\MBhG)^2\MB{F}.
\end{equation}
Hence, in view of the block-diagonal character of $\MB{\hG}$, it it sufficient
to prove the projection property of sub-matrices $(\lambda\MBhG)^{\Vk\Vk}$ only.
This follows using a simple algebra, recall Eq.~\eqref{eq:Gamma}:
$$
(\lambda\MBhG)^{\Vk\Vk}
(\lambda\MBhG)^{\Vk\Vk}
=
\frac{\Vxi(\Vk)\otimes\Vxi(\Vk)}{\Vxi(\Vk)\cdot\Vxi(\Vk)}
\cdot
\frac{\Vxi(\Vk)\otimes\Vxi(\Vk)}{\Vxi(\Vk)\cdot\Vxi(\Vk)}
=
\frac{\Vxi(\Vk)\otimes\Vxi(\Vk)}{\Vxi(\Vk)\cdot\Vxi(\Vk)}
=
(\lambda\MBhG)^{\Vk\Vk}.
$$
The orthogonality of $\MB{P}_{\cE}$ now follows \JV{from}
$$
\MB{P}_{\cE}^*
=
\left(
  \lambda\MB{F}^{-1}\MBhG\MB{F}
\right)^*
=
\lambda\MB{F}^{*}
\MBhG^*
\left(\MB{F}^{-1}\right)^*
=
\lambda\MB{F}^{-1}\MBhG\MB{F}
=
\MB{P}_{\cE},
$$
according to a well-known result of linear algebra, e.g. Proposition 1.8
in~\cite{Saad:2003:IMSL}.
\qed
\epro

\begin{remark}
It follows from the previous results that the subspace $\cE$ collects the
non-zero coefficients of trigonometric polynomials $\mathcal{T}^\VN$ with zero
rotation, which represent admissible solutions to the unit cell problem defined
by~\eqref{eq:problem_def_1}. Note that the orthogonal space $\cEp$ contains the
trigonometric representation of constant fields, cf.~\cite[Section~12.7]{Milton:2002:TC}.
\end{remark}

\begin{lemma}\label{lem:solution}
The solution $\MBe$ to the linear system~\eqref{eq:linear_system} admits the
decomposition $\MBe=\MB{e^0}+\MBe_{\cE}$, with $\MBe_{\cE}\in\cE$ satisfying 
\begin{equation}\label{eq:linear_system_cE2}
\MBPE\MBL\MBe_{\cE} 
+ 
\MBPE\MBL\MB{e^0} 
= 
\MB{0}.
\end{equation}
\end{lemma}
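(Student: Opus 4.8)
The plan is to recast the linear system~\eqref{eq:linear_system} entirely in terms of the orthogonal projection $\MBPE$ from Definition~\ref{def:operator}, and then to split it into its components lying in $\cE$ and in $\cEp$. Since $\MB{L^0}=\lambda\MBI$, the definition $\MBPE=\lambda\MBFi\MBhG\MBF$ yields $\MBFi\MBhG\MBF=\lambda^{-1}\MBPE$, so the operator in~\eqref{eq:linear_system} rewrites as $\MBB=\lambda^{-1}\MBPE\MBL-\MBPE$. First I would substitute this into $(\MBI+\MBB)\MBe=\MB{e^0}$ to obtain the equivalent form $(\MBI-\MBPE)\MBe+\lambda^{-1}\MBPE\MBL\MBe=\MB{e^0}$.

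The key structural input is that $\MB{e^0}$ represents a constant (macroscopic) field, whose only nonzero Fourier mode sits at $\Vk=\Vo$; because $\mhG(\Vo;\mL^0)=\Vo$ by~\eqref{eq:Gamma}, this forces $\MBPE\MB{e^0}=\Mo$, i.e. $\MB{e^0}\in\cEp$, exactly as anticipated by the Remark after Lemma~\ref{lem:PE_projection}. I expect this to be the main (though brief) obstacle, as it is the one place where the physics of the problem, rather than pure algebra, enters the argument; everything else follows from the idempotency $\MBPE^2=\MBPE$ established in Lemma~\ref{lem:PE_projection}.

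With these two facts in hand I would apply $\MBI-\MBPE$ and $\MBPE$ separately to the rewritten system. Applying $\MBI-\MBPE$ and using $(\MBI-\MBPE)\MBPE=\Mo$ together with $\MBPE\MB{e^0}=\Mo$ gives $(\MBI-\MBPE)\MBe=\MB{e^0}$; setting $\MBe_{\cE}:=\MBPE\MBe\in\cE$ then delivers the claimed decomposition $\MBe=\MB{e^0}+\MBe_{\cE}$. Applying $\MBPE$ and using $\MBPE(\MBI-\MBPE)=\Mo$, $\MBPE^2=\MBPE$, and again $\MBPE\MB{e^0}=\Mo$ collapses the system to $\MBPE\MBL\MBe=\Mo$. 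Substituting the decomposition $\MBe=\MB{e^0}+\MBe_{\cE}$ into this last identity produces precisely~\eqref{eq:linear_system_cE2}, namely $\MBPE\MBL\MBe_{\cE}+\MBPE\MBL\MB{e^0}=\Mo$, which completes the proof.
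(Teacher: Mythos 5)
Your proof is correct and follows essentially the same route as the paper: both rewrite $\MBB$ as $\lambda^{-1}\MBPE\MBL-\MBPE$ using $\MB{L^0}=\lambda\MBI$, invoke the projection properties of $\MBPE$ from Lemma~\ref{lem:PE_projection}, and use the key fact $\MB{e^0}\in\cEp$ to split the equation into its $\cE$ and $\cEp$ components (the paper decomposes $\MBe=\MBe_{\cE}+\MBe_{\cEp}$ first and then matches components, whereas you apply $\MBPE$ and $\MBI-\MBPE$ to the equation, which is the same argument). If anything, your explicit justification that $\MBPE\MB{e^0}=\Mo$ via the zero-frequency mode and $\mhG(\Vo;\mL^0)=\Vo$ is a small improvement, since the paper asserts $\MBe^0\in\cEp$ without proof, deferring to a remark.
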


\begin{proof}
As $\MBe \in \set{R}^{d \times \VN}$, Lemma~\ref{lem:PE_projection} ensures that
it can be decomposed into two orthogonal parts $\MBe_{\cE} = \MBPE\MBe$ and
$\MBe_{\cEp} = (\MBI-\MBPE)\MBe$. Substituting this expression
into~\eqref{eq:linear_system}, and using the identity $\MB{B} =
\lambda\MB{F}^{-1}\MBhG\MB{F}\left(\tfrac{\MB{L}}{\lambda}-\MB{I}\right)$, we
arrive at
\begin{equation}\label{eq:linear_system_cE}
\frac{1}{\lambda}\MBPE\MBL\MBe_{\cE} 
+ 
\MBe_{\cEp} 
+ 
\frac{1}{\lambda}\MBPE\MBL\MBe_{\cEp} 
= 
\MB{e^0}.
\end{equation}
Since $\MBe^0 \in \cEp$, we have $\MBe_{\cEp} = \MB{e^0}$ and the proof is
complete. \qed
\end{proof}

With these auxiliary results in hand, we are in the position to present our
main result.

\begin{proposition}
  The non-symmetric system of linear equations~\eqref{eq:linear_system} is
  solvable by the Conjugate Gradient method for an initial vector $\MBe_{(0)}
  = \MBe^0 + \widetilde{\MBe}$ with $\widetilde{\MBe}\in\cE$. Moreover, the
  sequence of iterates is independent of the parameter $\lambda$.
\end{proposition}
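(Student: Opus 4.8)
The plan is to exploit the decomposition behind Lemmas~\ref{lem:PE_projection} and~\ref{lem:solution}: although $\MBI+\MBB$ is non-symmetric on all of $\set{R}^{d\times\VN}$, the Conjugate Gradient iteration started at $\MBe_{(0)}=\MB{e^0}+\widetilde{\MBe}$ never leaves the affine space $\MB{e^0}+\cE$, and on the linear space $\cE$ it reduces to textbook CG for a symmetric positive-definite operator. First I would rewrite the system matrix using $\MB{L^0}=\lambda\MBI$ and $\MBPE=\lambda\MBFi\MBhG\MBF$, namely $\MBB=\tfrac{1}{\lambda}\MBPE\MBL-\MBPE$, so that
\[
\MBI+\MBB=(\MBI-\MBPE)+\tfrac{1}{\lambda}\MBPE\MBL .
\]
Since $\MB{e^0}\in\cEp$ and $\widetilde{\MBe}\in\cE$, the operator $\MBI-\MBPE$ returns $\MB{e^0}$ and annihilates $\widetilde{\MBe}$; hence the initial residual
\[
\Vr_0=\MB{e^0}-(\MBI+\MBB)\MBe_{(0)}=-\tfrac{1}{\lambda}\MBPE\MBL\MBe_{(0)}
\]
already lies in $\cE$.

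The crux is the invariance of $\cE$ under the iteration. For every $\Vp\in\cE$ one has $\MBPE\Vp=\Vp$, whence $(\MBI-\MBPE)\Vp=\Vo$ and $(\MBI+\MBB)\Vp=\tfrac{1}{\lambda}\MBPE\MBL\Vp\in\cE$. Thus each matrix--vector product in CG keeps residuals and search directions inside $\cE$, the iterates remain in $\MB{e^0}+\cE$, and on $\cE$ the system operator acts as $\tfrac{1}{\lambda}\MBPE\MBL$. I would then check that this restriction is symmetric positive definite: for $\Vv,\Vw\in\cE$, the self-adjointness of $\MBPE$ (Lemma~\ref{lem:PE_projection}) together with $\MBPE\Vw=\Vw$ gives $\langle(\MBI+\MBB)\Vv,\Vw\rangle=\tfrac{1}{\lambda}\langle\MBL\Vv,\Vw\rangle$, which is symmetric in $\Vv,\Vw$ and strictly positive for $\Vv=\Vw\neq\Vo$ because $\MBL$ is symmetric positive definite and $\lambda>0$. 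Positivity of $\langle(\MBI+\MBB)\Vp,\Vp\rangle$ rules out any breakdown, and the classical convergence theory for symmetric positive-definite systems (e.g.~\cite{Saad:2003:IMSL}) then applies verbatim on $\cE$; CG therefore terminates at the unique element of $\MB{e^0}+\cE$ with vanishing residual, which by Lemma~\ref{lem:solution} is the sought solution $\MBe$.

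For the independence of $\lambda$ I would first note that $\lambda\MBhG$, and hence $\MBPE$ and the space $\cE$, do not depend on $\lambda$: from~\eqref{eq:Gamma}, $\Vxi\cdot\mL^0\cdot\Vxi=\lambda\,(\Vxi\cdot\Vxi)$ for $\mL^0=\lambda\mI$, so that $\lambda\mhG=\Vxi\otimes\Vxi/(\Vxi\cdot\Vxi)$ is independent of $\lambda$. Consequently the only $\lambda$-dependence in both the effective operator $\tfrac{1}{\lambda}\MBPE\MBL$ and the initial residual $-\tfrac{1}{\lambda}\MBPE\MBL\MBe_{(0)}$ is the common scalar $1/\lambda$, while $\MBe_{(0)}$ is fixed. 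A one-line induction on the CG recurrences shows that scaling the operator and the right-hand side by a common positive constant $c$ produces $\Vr_k\mapsto c\Vr_k$, $\Vp_k\mapsto c\Vp_k$, $\alpha_k\mapsto c^{-1}\alpha_k$ and $\beta_k\mapsto\beta_k$, so that the iterates $\MBe_{(k)}$ are left unchanged; taking $c$ to be the ratio of two values of $\lambda$ yields the claim.

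I expect the genuine obstacle to be the pair of structural facts in the second paragraph---that $\cE$ is invariant under $\MBI+\MBB$ and that the restriction $\tfrac{1}{\lambda}\MBPE\MBL$ to $\cE$ is symmetric positive definite. These are exactly what reconcile the non-symmetry of the global operator with the applicability of CG; once they are secured, the reduction to the standard algorithm and the scaling argument for $\lambda$-independence are routine.
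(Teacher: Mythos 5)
Your proposal is correct and follows essentially the same route as the paper's proof: initial residual in $\cE$, invariance of $\cE$ under $\MBI+\MBB$, symmetry and positive definiteness of the restricted operator $\tfrac{1}{\lambda}\MBPE\MBL$, standard CG theory on $\cE$, and a scaling argument for the $\lambda$-independence. In fact, you explicitly verify the two facts the paper only asserts (the $\MBB$-invariance of $\cE$ and the SPD property on $\cE$), and your recurrence-scaling induction is a concrete rendering of the paper's remark that different $\lambda$ generate identical Krylov subspaces.
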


\begin{proof}[outline]
It follows from Lemma~\ref{lem:solution} that the solution
to~\eqref{eq:linear_system} admits yet another, optimization-based,
characterization in the form
\begin{equation}
\MBe 
= 
\MBe^0 
+
\arg \min_{%
  \bar{\MBe}\in\cE
}
\left[
\frac{1}{2}
\scal{%
  \MBL\bar{\MBe}}{\bar{\MBe}
}_{\set{R}^{d\times\VN}}
+
\scal{%
\MBL\MBe^0}{\bar{\MBe}}_{\set{R}^{d\times\VN}}
\right].
\end{equation}
The residual corresponding to the initial vector $\MBe_{(0)}$ \JV{equals to}
$$
\MB{r}_{(0)} 
= 
\MBe^0 
- 
\left( \MBI + \MBB \right)
\left( \MBe^0 + \widetilde{\MBe} \right) 
=
-\frac{1}{\lambda} \MBPE\MBL\MBe^0
-\frac{1}{\lambda} \MBPE\MBL\widetilde{\MBe}
\in
\cE.
$$
It can be verified that the subspace $\cE$ is $\MBB$-invariant, thus
$(\MBI+\MBB)\cE\subset\cE$. Therefore, the Krylov subspace
$$
\Kryl{m}{\MBI+\MBB}{\MB{r}_{(0)}}
= 
\mathrm{span}  
\left\{
\Vr_{(0)}, 
(\MBI+\MBB)\MB{r}_{(0)},
\ldots,
(\MBI+\MBB)^m\MB{r}_{(0)}
\right\}
\subset \cE
$$
for arbitrary $m \in \mathbb{N}$. This implies that the residual $\MB{r}_{(m)}$
and the Conjugate Gradient search direction $\MB{p}_{(m)}$ at the $m$-th
iteration satisfy $\MB{r}_{(m)} \in \cE$ and $\MB{p}_{(m)} \in \cE$. Since
$\MBB$ is symmetric and positive-definite on $\cE$, the convergence of $\CG$
algorithm now follows from standard arguments, e.g.~Theorem~6.6 in \cite{Saad:2003:IMSL}. 
Observe that different choices of $\lambda$ generate identical Krylov subspaces,
thus the sequence of iterates is independent of $\lambda$.\qed
\end{proof}

\begin{remark}
Note that it is possible to show, using direct calculations based on the
projection properties of $\MBPE$, that the Biconjugate Gradient algorithm
produces exactly the same sequence of vectors as the Conjugate Gradient method,
see ~\cite{Vondrejc:2009:AHM}.
\end{remark}

\section{Numerical example}\label{sec:example}
To support our theoretical results, we consider a three-dimensional model
problem of electric conduction in a cubic periodic unit cell
$\PUC=\prod_{\alp=1}^3(-\frac{1}{2},\frac{1}{2})$, representing a
two-phase medium with spherical inclusions of $25\%$ volume fraction.
The conductivity parameters are defined as
\begin{equation*}
\mL(\Vx) = 
\begin{cases}
\rho\mI,&\|\Vx\|_2<(\frac{3}{16\pi})^{\frac{1}{3}}\\
\begin{pmatrix}
  1 & 0.2 & 0.2 \\
0.2 & 1 & 0.2 \\
0.2 & 0.2 & 1 \\
\end{pmatrix},&\text{otherwise}
\end{cases}
\end{equation*}
where $\rho>0$ denotes the contrast of phase conductivities. 
We consider the macroscopic field $\Ve^0 = [1,0,0]$ and discretize the unit cell
with $\VN = [n, n, n]$ nodes\footnote{%
In particular, $n$ was taken consequently as $16, 32, 64, 128$ and $160$ leading
up to $3\cdot 160^3 \doteq 12.2 \times 10^6$ unknowns}. The conductivity of the
homogeneous reference medium \JV{$\mL^0\in\set{R}^{d\times d}$} is parametrized as
\begin{align}\label{eq:ref_media_choice}
\mL^0 &= \lambda\mI, &
\lambda
  &= 
  1 - \omega + \rho \omega,
\end{align}
where $\omega \approx 0.5$ delivers the optimal convergence of the original
Moulinec-Suquet \JV{Fast-Fourier Transform-based Homogenization}~($\FFTH$)
algorithm~\cite{Moulinec:1994:FNMC}.

We first investigate the sensitivity of Conjugate Gradient~($\CG$) algorithm to
the choice of reference medium. The results appear in \Fref{fig:example1a},
plotting the relative number of iterations for $\CG$ against the conductivity of
the reference medium parametrized by $\omega$,
recall~\Eqref{eq:ref_media_choice}. As expected, $\CG$ solver achieve a
significant improvement over $\FFTH$ method as it requires about \JV{$40\%$
iterations of $\FFTH$ for a mildly-contrasted composite down to $4\%$ for
$\varrho = 10^3$}. The minor differences visible especially for $\rho = 10^3$
can be therefore attributed to accumulation of round-off errors. These
observations fully confirm our theoretical results presented earlier in
Section~\ref{sec:convergence}.

\begin{figure}[ht]
\centering
\subfigure[]{\includegraphics[scale=0.95]{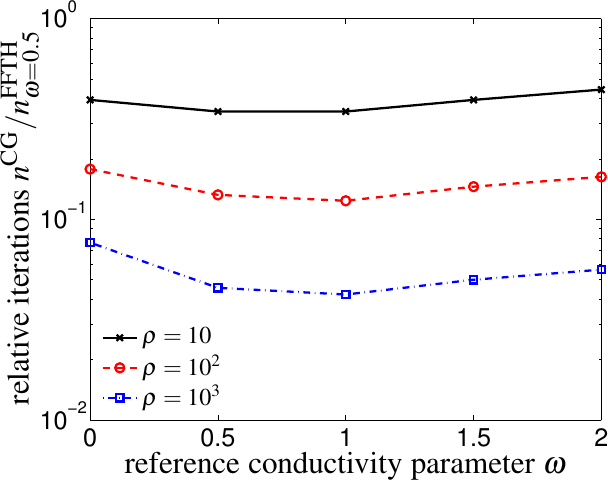}
\label{fig:example1a}
}
~
\subfigure[]{\includegraphics[scale=0.95]{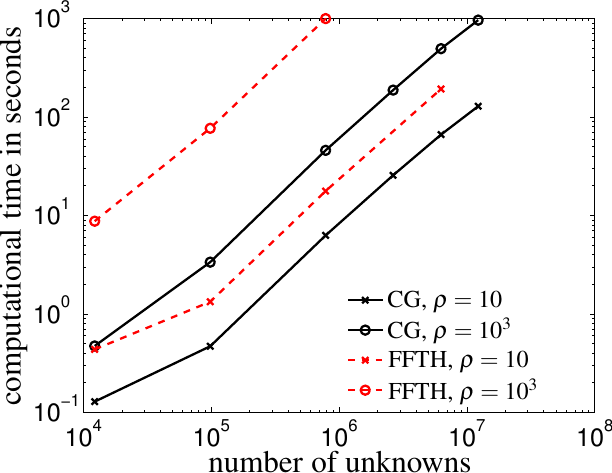}%
\label{fig:example1b}}
\caption{(a)~Relative number of iterations as a function of the
  reference medium parameter $\omega$ and (b)~computational time as a function
  of the number of unknowns.}
\label{fig:example1}
\end{figure}

In \Fref{fig:example1b}, we present the total computational
time\footnote{%
The problem was solved with a \Matlab in-house code on a machine
Intel\textsuperscript{\textregistered} Core\texttrademark 2 Duo 3 GHz CPU, 3.28
GB computing memory with Debian linux 5.0 operating system.}
as a function of the number of degrees of freedom and the phase ratio $\rho$.
The results confirm that the computational times scales linearly with the
increasing number of degrees of freedom for both schemes for a fixed
$\rho$~\cite{Zeman:2010:AFFT}. The ratio of the computational time for CG and
FFTH algorithms remains almost constant, which indicates that the cost of a
single iteration of $\CG$ and $\FFTH$ method is comparable. 

\JV{In addition, the memory requirements of both schemes are also comparable.
This aspect represents the major advantage of the short-recurrence CG-based
scheme over alternative schemes for non-symmetric systems, such as GMRES.
Finally note that finer discretizations can be treated by a straightforward
parallel implementation.}

\section{Conclusions}\label{sec:conclusions}
In this work, we have proven the convergence of Conjugate Gradient method for a
non-symmetric system of linear equations arising from periodic unit cell
homogenization problem and confirmed it by numerical experiment. The important
conclusions to be pointed out are as follows:

\begin{enumerate}

\item The success of the Conjugate Gradient method follows from the projection
properties of operator $\MB{P}_{\cE}$ introduced in
Definition~\ref{def:operator}, which reflect the structure of the underlying
physical problem.

\item Contrary to all available extensions of the $\FFTH$ scheme, the performance of
the Conjugate Gradient-based method is independent of the choice of reference
medium. This offers an important starting point for further improvements of
the method.

\end{enumerate}
\JV{Apart from the already mentioned parallelization, performance of the scheme
can further be improved by a suitable preconditioning procedure. This topic is
currently under investigation.}

\paragraph{Acknowledgments}
This work was supported by the Czech Science Foundation, through projects
No.~GA\v{C}R 103/09/1748, No.~GA\v{C}R 103/09/P490, No.~GA\v{C}R 201/09/1544,
and by the Grant Agency of the Czech Technical University in Prague through
project No.~SGS10/124/OHK1/2T/11.


\begin{thebibliography}{1}

\bibitem{Moulinec:1994:FNMC}
H.~Moulinec, P.~Suquet, A fast numerical method for computing the linear and
  nonlinear mechanical properties of composites, Comptes rendus de
  l'Acad{\'e}mie des sciences. S{\'e}rie II, M{\'e}canique, physique, chimie,
  astronomie 318~(11) (1994) 1417--1423.

\bibitem{Zeman:2010:AFFT}
J.~Zeman, J.~Vond\v{r}ejc, J.~Nov\'{a}k, I.~Marek, Accelerating a {FFT}-based
  solver for numerical homogenization of periodic media by conjugate gradients,
  Journal of Computational Physics 229~(21) (2010) 8065--8071.
\newblock \href {http://arxiv.org/abs/1004.1122} {\path{arXiv:1004.1122}}.

\bibitem{Saranen:2002:PIP}
J.~Saranen, G.~Vainikko, Periodic Integral and Pseudodifferential Equations
  with Numerical Approximation, Springer Monographs in Mathematics,
  Springer-Verlag, Berlin, Heidelberg, 2002.

\bibitem{Milton:2002:TC}
G.~W. Milton, The Theory of Composites, Vol.~6 of Cambridge Monographs on
  Applied and Computational Mathematics, Cambridge University Press, Cambridge,
  UK, 2002.

\bibitem{Smilauer:2006:MBM}
V.~\v{S}milauer, Z.~Bittnar, Microstructure-based micromechanical prediction of
  elastic properties in hydrating cement paste, Cement and Concrete Research
  36~(9) (2006) 1708--1718.

\bibitem{Prakash:2009:SMB}
A.~Prakash, R.~A. Lebensohn, Simulation of micromechanical behavior of
  polycrystals: finite elements versus fast {F}ourier transforms, Modelling and
  Simulation in Materials Science and Engineering 17~(6) (2009) 064010+.

\bibitem{Brenner:2010:RMC}
R.~Brenner, J.~Bravo-Castillero, Response of multiferroic composites inferred
  from a {fast-Fourier}-transform-based numerical scheme, Smart Materials and
  Structures 19~(11) (2010) 115004+.

\bibitem{Saad:2003:IMSL}
Y.~Saad, Iterative Methods for Sparse Linear Systems, second edition with
  corrections Edition, Society for Industrial and Applied Mathematics,
  Philadelphia, PA, USA, 2003.

\bibitem{Vondrejc:2009:AHM}
J.~Vond\v{r}ejc, Analysis of heterogeneous materials using efficient meshless
  algorithms: {O}ne-dimensional study, Diploma thesis, Czech Technical
  University in Prague,
  {URL}:~\url{http://mech.fsv.cvut.cz/~vondrejc/download/ING.pdf} (2009).

\end{thebibliography}
\end{document}